\newtheorem{theorem}{Theorem}
\theoremstyle{remark}
\newtheorem{remark}[theorem]{Remark}
\title{A note on the polynomial-like iterative equations order}
\author{Szymon Draga}
\address{Institute of Mathematics\\ University of Silesia\\ Bankowa 14\\ 40-007 Katowice\\ Poland}
\email{szymon.draga@gmail.com}
\subjclass[2010]{39B12}
\keywords{Continuous solution, Iterate, Polynomial-like iterative equation, Recurrence relation}
\newcommand{\N}{\mathbb N}
\newcommand{\Root}{\mathcal R}
\newcommand{\R}{\mathbb R}
\newcommand{\Z}{\mathbb Z}
\newcommand*\xbar[1]{\hbox{\vbox{\hrule height 0.5pt \kern0.5ex\hbox{\kern-0.1em\ensuremath{#1}\kern-0.1em}}}}
\begin{document}

\begin{abstract}
We show that, under reasonable assumptions, two negative roots can be eliminated from the characteristic equation of a polynomial-like iterative equation. This result gives a new case where we may lower the order of such an equation.
\end{abstract}

\maketitle

\section{Introduction}
Let $n$ be a positive integer and $I\subset\R$ be an interval. We are interested in so-called polynomial-like iterative equations, namely, equations of the form
\begin{equation}\label{eqmain}
a_nf^n(x)+\ldots+a_1f(x)+a_0x=0,
\end{equation}
where $f^k$ stands for the $k$-fold iterate of a self-mapping unknown function $f\colon I\to I$, the coefficients $a_n,\ldots,a_1,a_0$ are given real numbers and $a_0\neq0$. In general, it is difficult to find all continuous functions satisfying equation \eqref{eqmain} even in the case $n=3$; a partial solution in this case was given in \cite{zhang-gong} and the complete solution for $n=2$ was presented in \cite{matkowski-zhang1,nabeya}. One of methods for finding solutions to equation \eqref{eqmain} involves lowering its order. Such results were obtained in \cite{draga-morawiec,yang-zhang,zhang-zhang} (see Theorem \ref{thmref} below); the present paper contains a new result in this spirit. For a~similar investigation concerning non-homogenous polynomial-like iterative equations, where zero on the right-hand side is replaced by an arbitrary continuous function, see, e.g. \cite{zhang-xu-zhang}.

We shall recall some basic properties of solutions to polynomial-like equations. Assume that a~continuous function $f\colon I\to I$ satisfies equation \eqref{eqmain}. It can be easily shown that $f$ is injective (see, e.g. \cite[Lemma 2.1]{draga-morawiec}) and therefore monotone. Assuming that $f(x)=rx$ we obtain the so-called characteristic equation of \eqref{eqmain}:
\begin{equation}\label{eqchar}
a_nr^n+\ldots+a_1r+a_0=0.
\end{equation}
This equation may also be considered as the characteristic equation of the recurrence relation
\begin{equation}\label{eqrec}
a_nx_{j+n}+\ldots+a_1x_{j+1}+a_0x_j=0
\end{equation}
in which the sequence $(x_j)_{j\in\N_0}$ is obtained in the following way: We choose $x_0\in I$ arbitrarily and put $x_j=f(x_{j-1})$ for $j\in\N$. It is easy to see that $f$ satisfies \eqref{eqmain} if and only if $(x_j)_{j\in\N_0}$ satisfies \eqref{eqrec}.

Since the function $f$ is monotone, the sequence $(x_j)_{j\in\N_0}$ is either monotone (in the case of increasing $f$) or anti-monotone (in the case of decreasing $f$). By \textit{anti-monotone} we mean that the expression $(-1)^j(x_{j+1}-x_j)$ does not change its sign when $j$ runs through $\N_0$. Consider $y_0\in I$ and define a sequence $(y_j)_{j\in\N_0}$ in the same way as we did for $x_0$. Similarly, the sequence $(x_j-y_j)_{j\in\N_0}$ has a~constant sign, in the case of increasing $f$, or alternates in sign, in the case of decreasing $f$. Let us note that the sequence $(x_{j+1}-y_j)_{j\in\N_0}$ also has the same property.

In the case where $f$ is surjective (and hence bijective) we can consider the dual equation
\begin{equation}\label{eqdual}
a_0f^n(x)+\ldots+a_{n-1}f(x)+a_nx=0.
\end{equation}
Putting $f^{-n}(x)$ in place of $x$ we see that $f$ satisfies \eqref{eqmain} if and only if $f^{-1}$ satisfies \eqref{eqdual}. We can also extend the above defined sequence $(x_j)_{j\in\N_0}$ to the whole $\Z$ by setting $x_{-j}=f^{-1}(x_{-j+1})$ for $j\in\N$. Then relation \eqref{eqrec} is satisfied for all $j\in\Z$.

For the theory of linear recurrence relations we refer the reader, for instance, to \cite[\S 3.2]{jerri}. We shall recall only the most significant theorem in this matter. In order to do this and simplify the writing we introduce the following notation: For a~given polynomial $c_nr^n+\ldots+c_1r+c_0$ we denote by $\Root(c_n,\ldots,c_0)$ the collection $\{(r_1,k_1),\ldots,(r_p,k_p)\}$ of all pairs of pairwise distinct (complex) roots $r_1,\ldots,r_p$ and their multiplicities $k_1,\ldots,k_p$, respectively. Here and throughout the present paper by a~polynomial we mean a polynomial with real coefficients. Note that in the introduced notation $k_1+\ldots+k_p$ equals the degree of $c_nr^n+\ldots+c_1r+c_0$ and by writing $(\mu,k),(\xbar{\mu},k)\in\Root(c_n,\ldots,c_0)$ we mean $\mu$ to be non-real.

\begin{theorem}\label{thmrec}
Assume that
\[ \Root(a_n,\ldots,a_0)=\{(\lambda_1,l_1),\ldots,(\lambda_p,l_p),(\mu_1,m_1),(\xbar{\mu_1},m_1),\ldots,(\mu_q,m_q),(\xbar{\mu_q},m_q)\}. \]
Then a real-valued sequence $(x_j)_{j\in\N_0}$ is a solution to \eqref{eqrec} if and only if it is given by
\[ x_j=\sum_{k=1}^pA_k(j)\lambda_k^j+\sum_{k=1}^q\big(B_k(j)\cos j\phi_k+C_k(j)\sin j\phi_k\big)|\mu_k|^j\quad\mbox{ for }j\in\N_0, \]
where $A_k$ is a polynomial whose degree equals at most $l_k-1$ for $k=1,\ldots,p$ and $B_k,C_k$ are polynomials whose degrees equal at most $m_k-1$, with $\phi_k$ being an argument of $\mu_k$, for $k=1,\ldots,q$.
\end{theorem}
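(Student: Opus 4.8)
The plan is to reduce the statement to two classical facts about the recurrence \eqref{eqrec}: that its solution space has the expected dimension, and that the quasi-polynomial sequences attached to the roots are linearly independent. Write $P(r)=a_nr^n+\dots+a_1r+a_0$ for the characteristic polynomial and let $d=\deg P$, so that $d=\sum_{k=1}^pl_k+2\sum_{k=1}^qm_k$ by the note preceding the statement (in particular $d\le n$, and $P(0)=a_0\neq0$, so none of the roots vanishes). Since the leading coefficient of $P$ is nonzero, a real solution of \eqref{eqrec} is uniquely determined by, and may be prescribed arbitrarily on, any $d$ consecutive terms; hence the real vector space $V$ of all real solutions satisfies $\dim_\R V=d$. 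It therefore suffices to exhibit $d$ linearly independent sequences in $V$ of the shape asserted in the theorem: every member of $V$ is then a unique real-linear combination of them, which is exactly the displayed form with $A_k,B_k,C_k$ the resulting real polynomials, and conversely every such combination lies in $V$.

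I would obtain the $d$ sequences from the roots of $P$ in the usual way. Let $E$ denote the forward shift, $(Ex)_j=x_{j+1}$, so that \eqref{eqrec} reads $P(E)x=0$. For a real root $\lambda$ of multiplicity $l$ and $0\le s\le l-1$, the sequence $j\mapsto j^s\lambda^j$ lies in $V$: starting from $P(E)(\lambda^j)=P(\lambda)\lambda^j$, one applies the operator $\lambda\,\mathrm d/\mathrm d\lambda$ to this identity $s$ times and invokes $P(\lambda)=P'(\lambda)=\dots=P^{(l-1)}(\lambda)=0$; since $\lambda\,\mathrm d/\mathrm d\lambda$ is a derivation, the Leibniz rule turns this into $P(E)(j^s\lambda^j)=0$ after a routine computation. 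The same manipulation applied to a non-real root $\mu_k=|\mu_k|(\cos\phi_k+i\sin\phi_k)$ of multiplicity $m_k$ shows that $j\mapsto j^s\mu_k^j$ is a complex solution for $0\le s\le m_k-1$; because $P$ has real coefficients, its real and imaginary parts $j\mapsto j^s|\mu_k|^j\cos j\phi_k$ and $j\mapsto j^s|\mu_k|^j\sin j\phi_k$ then lie in $V$. This produces $\sum_kl_k+2\sum_km_k=d$ sequences of the required form.

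The remaining — and, I expect, only genuinely substantive — point is that these $d$ sequences are linearly independent over $\R$. Suppose a real-linear combination of them vanishes identically. Substituting $\cos j\phi_k=\tfrac12\big(\mu_k^j+\overline{\mu_k}^{\,j}\big)/|\mu_k|^j$ and $\sin j\phi_k=\tfrac1{2i}\big(\mu_k^j-\overline{\mu_k}^{\,j}\big)/|\mu_k|^j$ turns the relation into $\sum_{k=1}^rQ_k(j)\rho_k^j=0$ for all $j\in\N_0$, where $\rho_1,\dots,\rho_r$ are the pairwise distinct (nonzero, complex) roots of $P$, each $Q_k$ is a polynomial whose degree is below the multiplicity of $\rho_k$, and — the substitution being invertible — all the $Q_k$ vanish if and only if all the original real coefficients do. It thus suffices to prove the classical independence of quasi-polynomial sequences: if $\sum_kQ_k(j)\rho_k^j=0$ for every $j\in\N_0$ then $Q_1\equiv\dots\equiv Q_r\equiv0$. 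I would do this via the generating function $\sum_{j\ge0}\big(\sum_kQ_k(j)\rho_k^j\big)z^j$, which on one hand is identically zero and on the other hand is a rational function whose partial-fraction decomposition has, for each $k$ with $Q_k\not\equiv0$, a pole of order exactly $\deg Q_k+1$ at the point $1/\rho_k$; as the points $1/\rho_k$ are distinct these poles cannot cancel, so every $Q_k\equiv0$. (An induction on $r$, peeling off a root of largest modulus at each step, is an alternative.) This establishes independence, and together with $\dim_\R V=d$ it proves the theorem.
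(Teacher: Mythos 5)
Your proposal is correct, but there is nothing in the paper to compare it against: the paper states Theorem \ref{thmrec} as a known result from the theory of linear recurrences and gives no proof, referring instead to \cite[\S 3.2]{jerri}. What you wrote is essentially the standard textbook argument, carried out completely and carefully: the dimension count $\dim_\R V=d=\deg P$ (where you rightly use that the leading coefficient of $P$ and $a_0$ are nonzero, so in particular no root vanishes and the degree may be smaller than $n$), the verification via the shift operator and the derivation $\lambda\,\mathrm d/\mathrm d\lambda$ that $j\mapsto j^s\lambda^j$ solves \eqref{eqrec} for $s$ below the multiplicity, the passage to real and imaginary parts for conjugate pairs, and the linear independence of the quasi-polynomial sequences, which is indeed the only substantive point; your generating-function argument (exact pole order $\deg Q_k+1$ at the distinct points $1/\rho_k$, hence no cancellation) is sound, as is the invertibility of the passage between the real data $A_k,B_k,C_k$ and the complex polynomials $Q_k$. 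One small remark: the paper later uses the theorem for sequences indexed by $\Z$ (as noted after its statement); your argument extends to that setting with no change, since $a_0\neq0$ lets one also recurse backwards, so the evaluation map on any $d$ consecutive terms is still a bijection and the same basis works.
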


It is worth mentioning that the above theorem is also valid for sequences defined on the whole $\Z$. We shall use this fact in the proof of our main result.

\section{The main result}
It was observed by Matkowski and Zhang in \cite{matkowski-zhang2} that if a~polynomial $b_mr^m+\ldots+b_1r+b_0$ divides $a_nr^n+\ldots+a_1r+a_0$ and $f$ satisfies
\begin{equation}\label{eqreduced}
b_mf^m(x)+\ldots+b_1f(x)+b_0x=0,
\end{equation}
then $f$ satisfies also \eqref{eqmain}. One of methods for solving equation \eqref{eqmain} involves a~partial converse of this statement. More precisely, we want to find a divisor of the polynomial $a_nr^n+\ldots+a_1r+a_0$ such that the corresponding polynomial-like iterative equation of lower order is satisfied. Known results, concerning elimination of non-real roots or real roots of opposite sign, are listed below.

\begin{theorem}\label{thmref}
\emph{\textbf{(i)} \cite[Thm. 3.3]{draga-morawiec} (cf. \cite[Thm. 5]{yang-zhang} and \cite[Thm. 1]{zhang-zhang})} Assume that
\[ \Root(a_n,\ldots,a_0)=\{(\lambda_1,l_1)\ldots,(\lambda_p,l_p),(\mu_1,k_1),(\xbar{\mu_1},k_1),\ldots,(\mu_q,k_q),(\xbar{\mu_q},k_q)\}. \]
If $|\lambda_1|\le\ldots\le|\lambda_p|<|\mu_1|\le\ldots\le|\mu_q|$, then a continuous function $f\colon I\to I$ satisfies equation \eqref{eqmain} if and only if it satisfies \eqref{eqreduced} with
\[ \Root(b_m,\ldots,b_0)=\{(\lambda_1,l_1)\ldots,(\lambda_p,l_p)\}. \]
\emph{\textbf{(ii)} \cite[Thms. 4.1 and 4.2]{draga-morawiec}} Assume that
\[ \Root(a_n,\ldots,a_0)=\{(r_1,k_1),(r_2,k_2),(\lambda_1,l_1),\ldots,(\lambda_p,l_p)\}. \]
Let also $|r_1|<|\lambda_1|\le\ldots\le|\lambda_p|<|r_2|$ and $r_1$, $r_2$ be real with $r_1r_2<0$; say $r_i>0$ and $r_j<0$. Then a continuous increasing surjection $f\colon I\to I$ satisfies equation \eqref{eqmain} if and only if it satisfies \eqref{eqreduced} with
\[ \Root(b_m,\ldots,b_0)=\{(r_i,k_i),(\lambda_1,l_1),\ldots,(\lambda_p,l_p)\}. \]
If $r_i\neq1$, then a continuous decreasing surjection $f\colon I\to I$ satisfies equation \eqref{eqmain} if and only if it satisfies \eqref{eqreduced} with
\[ \Root(b_m,\ldots,b_0)=\{(r_j,k_j),(\lambda_1,l_1),\ldots,(\lambda_p,l_p)\}. \]
If $r_i=1$, then a continuous decreasing surjection $f\colon I\to I$ satisfies equation \eqref{eqmain} if and only if it satisfies \eqref{eqreduced} with
\[ \Root(b_m,\ldots,b_0)=\{(1,1),(r_j,k_j),(\lambda_1,l_1),\ldots,(\lambda_p,l_p)\}. \]
\end{theorem}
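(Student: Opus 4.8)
The plan is to establish the nontrivial (``only if'') implications in both parts; the converses are immediate, since by the Matkowski--Zhang observation recalled above a function that satisfies the equation \eqref{eqreduced} attached to any divisor of $a_nr^n+\ldots+a_0$ automatically satisfies \eqref{eqmain}. So suppose $f$ satisfies \eqref{eqmain}, fix $x_0\in I$, and look at the orbit $(x_j)$: it solves the recurrence \eqref{eqrec}, hence by Theorem~\ref{thmrec} it is, in part~(i), a sum of real-root modes $A_k(j)\lambda_k^j$ and complex modes $(B_k(j)\cos j\phi_k+C_k(j)\sin j\phi_k)|\mu_k|^j$, and in part~(ii) a sum of $A(j)r_1^j$, $B(j)r_2^j$ and modes $C_k(j)\lambda_k^j$. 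In each case the goal is to show that, for \emph{every} $x_0$, the ``unwanted'' modes vanish identically --- the complex ones in~(i), the mode of the negative root $r_j$ in~(ii) (or only its top coefficients, in the exceptional case $r_i=1$). Once that is done, $(x_j)$ solves the recurrence whose characteristic polynomial is precisely the asserted divisor; evaluating it at $j=0$ gives \eqref{eqreduced} at $x_0$, and letting $x_0$ vary over $I$ finishes the argument. The only properties of $f$ used are that $(x_j)$ is monotone (for increasing $f$) or anti-monotone (for decreasing $f$), and --- when $f$ is a surjection, as assumed in~(ii) --- that the orbit extends to a two-sided sequence on $\Z$ with the same (anti-)monotonicity.

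For part~(i), assume for contradiction that some complex mode is nonzero. Among the nonzero ones take those of largest modulus $\rho$ and, among these, those whose coefficient polynomials have largest degree $d$. Every real root has modulus strictly below $\rho$, so dividing $x_j$ by $j^d\rho^j$ kills all real-root modes and all lower-order complex contributions: $x_j/(j^d\rho^j)-T_j\to0$ for a nonzero finite sum $T_j=\sum(\beta_k\cos j\phi_k+\gamma_k\sin j\phi_k)$ with no constant term. Such a trigonometric polynomial is not eventually of one sign --- its running averages tend to $0$ while those of its square tend to a positive constant --- so neither is $x_j$; but a monotone sequence is eventually of one sign, a contradiction. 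For decreasing $f$ one runs the same argument on the subsequences $(x_{2j})$ and $(x_{2j+1})$, which are monotone as orbits of the increasing map $f^2$, using that $\mu_k^2$ is never a positive real (a purely imaginary $\mu_k$ contributes the negative real root $-|\mu_k|^2$, i.e.\ a factor $(-1)^j$): a surviving complex mode still produces a dominant term that is not eventually one-signed, so monotonicity of the two subsequences forces every complex mode to vanish.

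For part~(ii), write $r_j<0<r_i$ as in the statement, let $P_i$ and $P_j$ denote the polynomial coefficients of the modes $r_i^\ell$ and $r_j^\ell$, and pass to the two-sided orbit $(x_\ell)_{\ell\in\Z}$. Suppose first that $f$ is an increasing surjection, so that $(x_\ell)_{\ell\in\Z}$ is monotone on all of $\Z$. If $P_j\not\equiv0$, the mode $P_j(\ell)r_j^\ell$ dominates the orbit as $\ell\to+\infty$ when $|r_j|=\max\{|r_1|,|r_2|\}$ and as $\ell\to-\infty$ when $|r_j|=\min\{|r_1|,|r_2|\}$; since $r_j<0$ it then alternates in sign along that end of $\Z$, so the orbit is not eventually of one sign there --- impossible for a monotone sequence. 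Hence $P_j\equiv0$ and $r_j$ is eliminated. Suppose now that $f$ is a decreasing surjection; anti-monotonicity of $(x_\ell)_{\ell\in\Z}$ gives, after fixing a sign convention, $x_{2\ell+1}-x_{2\ell}\ge0$ and $x_{2\ell+1}-x_{2\ell+2}\ge0$ for all $\ell$. If $P_i\not\equiv0$, then $x_\ell\sim P_i(\ell)r_i^\ell$ at the end where $|r_i|$ dominates, and the leading terms of these two differences come out as $(r_i-1)$ and $(1-r_i)$ times one and the same nonzero quantity of constant sign; both being $\ge0$ then forces $r_i=1$. Thus either $r_i$ is eliminated ($P_i\equiv0$) or $r_i=1$, and in the latter case the same comparison applied to the first differences of $P_i$ forces $P_i$ to be constant --- exactly the surviving pair $(1,1)$ in the statement.

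The algebraic bookkeeping --- how the roots and their multiplicities of $a_nr^n+\ldots+a_0$ transform under $f\mapsto f^2$, the precise leading-order asymptotics, and the degenerate sub-cases --- I would carry out by hand. The genuine difficulty, and what I expect to be the main obstacle, lies in the decreasing cases: in~(i) one must deal with several complex roots of a common modulus at once and check that the oscillation survives when one passes to the subsequences, and in~(ii) one must handle the threshold $r_i=1$ separately because there the reduction is only partial. The increasing cases, by contrast, collapse to the single elementary fact that a (two-sided) monotone sequence is eventually of one sign.
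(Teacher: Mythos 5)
The paper itself gives no proof of this theorem---it is quoted from \cite{draga-morawiec} (cf. \cite{yang-zhang,zhang-zhang}) with only the remark that those proofs examine the asymptotic behaviour of the orbit $(x_j)$---and your plan (represent the orbit via Theorem \ref{thmrec}, kill the unwanted modes by leading-term asymptotics against (anti-)monotonicity, and get the converse from the Matkowski--Zhang divisibility observation) is exactly that method, the same one the paper itself uses to prove Theorem \ref{thmlower}. Your outline is sound, including the one point you defer: in the decreasing case of (i), equal-modulus complex modes with arguments $\phi$ and $\pi-\phi$ can indeed cancel identically in one of the subsequences $(x_{2j})$, $(x_{2j+1})$, but the coefficient bookkeeping shows they cannot cancel in both, so the oscillation survives in at least one of them and the argument closes as you describe.
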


Those results were proved by examining the asymptotic behaviour of the sequence of consecutive iterates of the unknown function at a~given point. Using a~similar approach we obtain our new result, concerning elimination of negative roots, which reads as follows.

\begin{theorem}\label{thmlower}
Assume that
\[ \Root(a_n,\ldots,a_0)=\{(r_1,k_1),(r_2,k_2),(\lambda_1,l_1),\ldots,(\lambda_p,l_p)\}. \]
If $|r_2|<|\lambda_1|\le\ldots\le|\lambda_p|<|r_1|$ and $r_1$, $r_2$ are real with $r_1<-1<r_2<0$, then a~continuous surjection $f\colon I\to I$ satisfies equation \eqref{eqmain} if and only if it satisfies equation \eqref{eqreduced} with
\[ \Root(b_m,\ldots,b_0)=\{(r_i,k_i),(\lambda_1,l_1),\ldots,(\lambda_p,l_p)\}, \]
where $i=1$ or $i=2$.
\end{theorem}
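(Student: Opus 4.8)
The easy implication is immediate from the divisibility observation of Matkowski and Zhang recalled above: the polynomial with root data $\{(r_i,k_i),(\lambda_1,l_1),\ldots,(\lambda_p,l_p)\}$ divides the one with root data $\{(r_1,k_1),(r_2,k_2),(\lambda_1,l_1),\ldots,(\lambda_p,l_p)\}$, so any continuous $f$ satisfying \eqref{eqreduced} satisfies \eqref{eqmain}. For the converse, I would start from a continuous surjection $f\colon I\to I$ satisfying \eqref{eqmain}; it is then a monotone bijection, and for each $x_0\in I$ I would extend the orbit $(x_j)_{j\in\N_0}$ to $(x_j)_{j\in\Z}$ and invoke Theorem~\ref{thmrec} to write
\[ x_j=A_1^{x_0}(j)\,r_1^{\,j}+A_2^{x_0}(j)\,r_2^{\,j}+R_j^{x_0},\qquad j\in\Z, \]
where $\deg A_i^{x_0}\le k_i-1$ and $R_j^{x_0}$ collects the contribution of $\lambda_1,\ldots,\lambda_p$; writing $\rho_i=|r_i|$ and using $\rho_2<|\lambda_k|<\rho_1$, one has $R_j^{x_0}=o(\rho_1^{\,j})$ as $j\to+\infty$ and $R_j^{x_0}=o(\rho_2^{\,j})$ as $j\to-\infty$. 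By the correspondence between \eqref{eqmain}, \eqref{eqreduced} and their recurrences, together with Theorem~\ref{thmrec}, it is enough to show that \emph{$A_1^{x_0}\equiv 0$ for every $x_0\in I$, or $A_2^{x_0}\equiv 0$ for every $x_0\in I$} (then $f$ satisfies \eqref{eqreduced} with $i=2$, respectively $i=1$). A useful preliminary is that shifting an orbit by one step gives $A_i^{x_1}(j)=r_i\,A_i^{x_0}(j+1)$, so the property ``$A_i^{x_0}\equiv 0$'' is preserved along orbits.

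The point I would exploit is that $r_i<0$ forces $r_i^{\,j}=(-1)^j\rho_i^{\,j}$ to oscillate in sign: if $A_1^{x_0}\not\equiv 0$, then $x_j\sim A_1^{x_0}(j)(-1)^j\rho_1^{\,j}$ as $j\to+\infty$, so $|x_j|\to\infty$ while $x_j$ changes sign infinitely often; if $A_2^{x_0}\not\equiv 0$, the same happens as $j\to-\infty$, where now $\rho_2^{\,j}\to\infty$. When $f$ is increasing this settles everything at once: $(x_j)_{j\in\Z}$ is monotone, hence changes sign at most once, so $A_1^{x_0}\equiv 0$ for every $x_0$ and $f$ satisfies \eqref{eqreduced} with $i=2$. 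The decreasing case is the substantial one, because then $f^2$ is increasing, so the even and odd subsequences are monotone and the sign oscillation ceases to obstruct monotonicity.

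So assume $f$ is decreasing. I would use that each $(x_{2m})_{m\in\Z}$ is monotone, and that for any two orbits $(x_j),(y_j)$ the sequence $(-1)^j(x_j-y_j)$ keeps the constant sign $\operatorname{sign}(x_0-y_0)$. If $I\ne\R$, then $A_1^{x_0}\not\equiv 0$ would make $\{x_j\}$ unbounded both above and below, forcing $I=\R$; so in that case $A_1^{x_0}\equiv 0$ for all $x_0$ and we are done. If $I=\R$, let $x^*$ be the unique fixed point of $f$; its orbit is constant, whence $A_1^{x^*}\equiv A_2^{x^*}\equiv 0$, and $A_1^{x_0},A_2^{x_0}$ are also the coefficients appearing in the expansion of the sequence $x_j-x^*$. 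I would then argue in two steps. First, no single $x_0$ can have both $A_1^{x_0}\not\equiv 0$ and $A_2^{x_0}\not\equiv 0$: since $x_{2m}-x^*$ keeps the sign $\operatorname{sign}(x_0-x^*)$, while $x_{2m}-x^*\sim A_1^{x_0}(2m)\rho_1^{\,2m}$ as $m\to+\infty$ and $x_{2m}-x^*\sim A_2^{x_0}(2m)\rho_2^{\,2m}$ as $m\to-\infty$, with both moduli tending to $\infty$, the monotone sequence $(x_{2m})_{m\in\Z}$ would have to tend to $+\infty$ (or to $-\infty$) at \emph{both} ends of $\Z$, which is impossible. Second, putting $U_i=\{x_0\in\R:A_i^{x_0}\not\equiv 0\}$, I would suppose both $U_1$ and $U_2$ nonempty and pick $y_0\in U_2$ (so $A_1^{y_0}\equiv 0$); for any $x_0\in U_1$ (so $A_2^{x_0}\equiv 0$) one has $x_j-y_j=A_1^{x_0}(j)r_1^{\,j}-A_2^{y_0}(j)r_2^{\,j}+(R_j^{x_0}-R_j^{y_0})$, and comparing the leading term of $(-1)^j(x_j-y_j)$ at $+\infty$ with that of $(-1)^j(x_j-x^*)$ and at $-\infty$ with that of $(-1)^j(y_j-x^*)$ yields $\operatorname{sign}(x_0-x^*)=\operatorname{sign}(x_0-y_0)=\operatorname{sign}(x^*-y_0)$, i.e.\ $x^*$ lies strictly between $x_0$ and $y_0$. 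Thus $U_1$ sits entirely on the side of $x^*$ opposite to $y_0$; but $x_0$ and $x_1=f(x_0)$ lie on opposite sides of $x^*$ (as $f$ is decreasing with $f(x^*)=x^*$), while both belong to $U_1$ by the shift identity — a contradiction. Hence $U_1=\emptyset$ or $U_2=\emptyset$, and the theorem follows.

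The main obstacle is the decreasing case, and inside it the passage from the pointwise statement (step one) to the uniform one (step two): the clean asymptotic/monotonicity argument that kills the increasing case breaks down under $f\mapsto f^2$, so one must first localize, for each orbit, which of $r_1$, $r_2$ can survive, using the $f^2$-orbit $(x_{2m})$ and its behaviour at $\pm\infty$, and then propagate the choice along orbits by playing the fixed point $x^*$ against the shift $x_0\mapsto f(x_0)$. Keeping the signs consistent — at both ends of $\Z$, for differences of orbits, and relative to $x^*$ — is the delicate part.
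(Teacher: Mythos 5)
Your argument is correct, and it reaches the two key conclusions in the same order as the paper (first: a single bi-infinite orbit cannot involve both $r_1$ and $r_2$; second: the surviving root is the same for all orbits), but the mechanisms you use are genuinely different. The paper never invokes a fixed point and never distinguishes $I=\R$ from $I\neq\R$: for a single orbit it compares the signs of the limits of $(-1)^j(x_{j+1}-x_j)$ (normalised at $\pm\infty$), which forces $f$ to be decreasing and gives one relation between the leading coefficients via the negative factors $r_1-1$ and $r_2-1$, and then of $x_{2j+2}-x_{2j}$, which gives the opposite relation via $r_1^2-1>0>r_2^2-1$; for uniformity it plays the difference sequence $(x_j-y_j)$ against the shifted difference $(x_{j+1}-y_j)$, the sign flip coming from $r_1<0$. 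You instead dispatch the increasing case by the sign oscillation of $x_j$ itself, reduce to $I=\R$ by unboundedness, and then exploit the unique fixed point $x^*$ of the decreasing bijection: monotonicity of $(x_{2m})$ together with the constant sign of $x_{2m}-x^*$ rules out both roots in one orbit, and for uniformity you show $x^*$ separates every point of $U_1$ from a fixed point of $U_2$ and contradict this with the $f$-invariance of $U_1$ (your shift identity $A_i^{x_1}(j)=r_iA_i^{x_0}(j+1)$) and the fact that $f$ swaps the two sides of $x^*$. What the paper's route buys is economy of hypotheses at each step — no fixed point, no case split on the interval, and the decreasingness of $f$ falls out of the same computation; what yours buys is a more geometric picture (orbit types as an $f$-invariant partition of $I$ relative to $x^*$) and a very short treatment of the increasing case. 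Both proofs rest, implicitly, on the uniqueness of the representation in Theorem~\ref{thmrec} for bi-infinite sequences (needed for your shift identity and for subtracting expansions of two orbits), which is standard and is used tacitly in the paper as well.
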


\begin{proof}
Choose $x\in I$ arbitrarily. Define a sequence $(x_j)_{j\in\Z}$ by putting $x_0=x$, $x_j=f(x_{j-1})$ and $x_{-j}=f^{-1}(x_{-j+1})$ for $j\in\N$. Then relation \eqref{eqrec} is satisfied for all $j\in\Z$. Therefore, by Theorem \ref{thmrec}, we have
\[ x_j=A(j)r_1^j+F(j)+B(j)r_2^j\quad\mbox{ for }j\in\Z, \]
where $A$, $B$ are polynomials and $F$ stands for the part of solution to \eqref{eqrec} for which the roots $\lambda_1,\ldots,\lambda_p$ are responsible. We shall show that either $A\equiv0$ or $B\equiv0$.

For an indirect proof suppose that both polynomials $A$ and $B$ are non-zero. Denote by $s$ and $t$ degrees of $A$ and $B$, respectively. Similarly, let $a$ and $b$ be the leading coefficients of $A$ and $B$.

Since
\begin{multline*}
(-1)^j(x_{j+1}-x_j)=\big(A(j+1)r_1-A(j)\big)|r_1|^j\\ +(-1)^j\big(F(j+1)-F(j)\big)+\big(B(j+1)r_2-B(j)\big)|r_2|^j,
\end{multline*}
we have
\begin{align}
& \lim_{j\to-\infty} \frac{(-1)^j(x_{j+1}-x_j)}{|j|^t\!\cdot\!|r_2|^j}=(-1)^t(r_2-1)b,\label{eq1}\\
& \ \lim_{j\to\infty} \frac{(-1)^j(x_{j+1}-x_j)}{j^s\!\cdot\!|r_1|^j}=(r_1-1)a.\label{eq2}
\end{align}
This shows that the sequence $(x_j)_{j\in\Z}$ cannot be monotone (in fact, this shows that it cannot be monotone when either $A\not\equiv0$ or $B\not\equiv0$); consequently, $f$ cannot be increasing. Thus $f$ is decreasing.

According to the above observation the expression $(-1)^j(x_{j+1}-x_j)$ has a constant sign when $j$ runs through $\Z$. Combining this fact with equations \eqref{eq1} and \eqref{eq2}, we conclude that $a$ and $(-1)^tb$ have the same sign. Further, since $f^2$ is increasing, the expression
\begin{multline*}
x_{2j+2}-x_{2j}=\big(A(2j+2)r_1^2-A(2j)\big)|r_1|^{2j}\\ +F(2j+2)-F(2j)+\big(B(2j+2)r_2^2-B(2j)\big)|r_2|^{2j}
\end{multline*}
also has a constant sign. Similarly, we have
\[ \lim_{j\to-\infty}\frac{x_{2j+2}-x_{2j}}{|2j|^t\!\cdot\!|r_2|^{2j}}=(-1)^t(r_2^2-1)b,\qquad\lim_{j\to\infty}\frac{x_{2j+2}-x_{2j}}{(2j)^s\!\cdot\!|r_1|^{2j}}=(r_1^2-1)a. \]
As a result, $a$ and $(-1)^tb$ are of opposite sign; a contradiction. Therefore, $A\equiv0$ or $B\equiv0$. Using Theorem \ref{thmrec} once again we conclude that the assertion holds for a~fixed $x\in I$. It remains to show that elimination of the root $r_1$ or $r_2$ does not depend on $x$.

Consider $y\in I$ and define a sequence $(y_j)_{j\in\Z}$ in the same way as we did for $x$. Suppose, for the sake of a contradiction, that $x_j=A(j)r_1^j+F(j)$ and $y_j=G(j)+B(j)r_2^j$ for $j\in\Z$ with non-zero polynomials $A$ and $B$ ($F$ and $G$ stand for the terms for which the roots $\lambda_1,\ldots,\lambda_p$ are responsible). As before, let $s$, $t$ be the degrees and $a$, $b$ be the leading coefficients of $A$ and $B$, respectively. Since $f$ monotonically decreases, the sequence $(x_j-y_j)_{j\in\Z}$ alternates in sign. Thus the expression
\[ (-1)^j(x_j-y_j)=A(j)|r_1|^j+(-1)^j\big(F(j)-G(j)\big)-B(j)|r_2|^j \]
has a constant sign. Futher, we have
\[ \lim_{j\to-\infty}\frac{(-1)^j(x_j-y_j)}{|j|^t\!\cdot\!|r_2|^j}=(-1)^{t+1}b,\qquad\lim_{j\to\infty}\frac{(-1)^j(x_j-y_j)}{j^s\!\cdot\!|r_1|^j}=a \]
which means that $a$ and $(-1)^{t+1}b$ have the same sign. Repeating this reasoning with the sequence $(x_{j+1}-y_j)_{j\in\Z}$ we conclude that $a$ and $(-1)^{t+1}b$ have opposite signs. The obtained contradiction ends the proof.
\end{proof}

\begin{remark}
Since the equation $2f^2(x)+5f(x)+2x=0$ is satisfied by $f(x)=-2x$ and $f(x)=-\frac12x$, in general, it cannot be decided which root from $r_1$ and $r_2$ may be eliminated. Therefore, Theorem \ref{thmlower} states that equation \eqref{eqmain} is actually equivalent to an alternative of two equations of lower order.
\end{remark}

\begin{remark}
It is worth mentioning that if $I=\R$, then $f$ is necessarily bijective (see \cite[Lemma 1]{zhang-zhang}). Therefore, the assumption of surjectivity in Theorems \ref{thmref} and \ref{thmlower} is satisfied automatically in this case.
\end{remark}

\begin{remark}
Using quoted results and Theorem \ref{thmlower} the order of equation \eqref{eqmain} can be essentially lowered in many important cases. However, some cases still remains open. For instance, it is unknown whether non-real roots may be eliminated from characteristic equation \eqref{eqchar} without any additional assumptions (cf. \cite[Section 6]{draga-morawiec} and \cite[Section 6]{zhang-zhang}).
\end{remark}

\subsection*{Acknowledgement}
The research was supported by the University of Silesia Mathematics Department (Iterative Functional Equations and Real Analysis program).

\end{document}